\newtheorem{thm}{Theorem}[section]
\newtheorem{Remark}[thm]{Remark}
\newcommand{\qed}{\hfill \ensuremath{\Box}}
\newenvironment{proof}{\vspace{1ex}\noindent{\it Proof.}\hspace{0.5em}}
	{\hfill\qed\vspace{1ex}}
\newcommand{\R}{\mathbb R}
\def\RR{\mathop{\trm I\kern-,057cm R}}
\def\medint{-\kern-,375cm\int}
\def\div{\mathop{\rm div}}
\def\[{\bigl(}
\def\]{\bigl)}
\def\rz{\ifmmode{I\hskip -3.2pt R}
        \else{\hbox{$I\hskip -3.2pt R$}}\fi} 
\begin{document}

\title{Overdetermined boundary value problems for the $\infty$-Laplacian}

\author{G.~Buttazzo \& B.~Kawohl}


\maketitle


\bigskip\bigskip

\noindent
{\bf\small Abstract:} We consider overdetermined boundary value problems for the $\infty$-Laplacian in a domain $\Omega$ of $\R^n$ and discuss what kind of implications on the geometry of $\Omega$ the existence of a solution may have. The classical $\infty$-Laplacian, the normalized or game-theoretic $\infty$-Laplacian and the limit of the $p$-Laplacian as $p\to \infty$ are considered and provide different answers, even if we restrict our domains to those that have only web-functions as solutions.

\bigskip\noindent
{\bf\small Mathematics Subject Classification (2000).} 35R35, 49R05, 35J25

\bigskip\noindent
{\bf\small Keywords.} {\small overdetermined bvp, degenerate elliptic equation, web-function, cut locus, ridge set, viscosity solution}

\section{Motivation }\label{section1}\setcounter{equation}{0}

Suppose that $\Omega\subset\R^n$ is connected and bounded, with boundary at least of class $C^1$, and that $u\in C^1(\overline{\Omega})$ is a positive solution of the overdetermined boundary value problem
\begin{eqnarray}\label{op}
-\Delta_p u_p:=-\div\left(|\nabla u_p|^{p-2}\nabla u_p\right)&=&1\quad\hbox{in }\Omega,\\
\label{bc1}
u_p&=&0\quad\hbox{on }\partial\Omega,\\
\label{bc2}
-\frac{\partial u_p}{\partial\nu}&=&a\quad\hbox{on }\partial\Omega,
\end{eqnarray}
where $p\in(1,\infty)$ and $a$ is a positive constant. Does this have consequences on the geometry of $\Omega$? This question was answered in 1971 for $p=2$ by Serrin \cite{S} and Weinberger \cite{W}, and for general $p$ in 1987 by Garofalo and Lewis \cite{GL}. See also Farina and Kawohl \cite{FK} for related results. In both cases the domain $\Omega$ must be a ball of fixed radius related to $a$. This result leads us to the question: what happens if the $p$-Laplacian is replaced by the infinity Laplacian?

The answer depends on how we define the $\infty$-Laplacian and the notion of solution. In case of equation \eqref{op} and $p=2$ Serrin and Weinberger had classical $C^2(\Omega)$ solutions in mind, while for general $p\in(1,\infty)$ the solutions were weak in the sense that
$$\int_\Omega|\nabla u_p|^{p-2}\nabla u_p\nabla v\ dx=\int_\Omega v\ dx \quad\hbox{ for every }v\in W^{1,p}_0(\Omega).$$

\section{The classical $\infty$-Laplacian }\label{section2}\setcounter{equation}{0}

The classical $\infty$-Laplacian operator is usually defined as $\Delta_\infty u:= $ $\langle D^2uDu,$ $Du\rangle$, with $Du$ denoting the gradient and $D^2 u$ the Hessian matrix of $u$. For functions in $C^2$ the second directional derivative in direction $\nu$ is given by $\langle D^2u\ \nu,\nu\rangle$. If $\nu$ denotes the direction $-Du/|Du|$ of steepest descent of $u$, the equation $-\Delta_\infty u=1$ can be rewritten as 
\begin{equation}\label{firstdef}
-u_{\nu\nu}|u_\nu|^2=1,
\end{equation}
and if $\Omega$ should happen to be a ball of radius $R$ centered at zero, $u(x)$ is necessarily a radial function. In fact, then
$$u(r)=\frac{3^{4/3}}{4}(R^{4/3}-r^{4/3})\qquad\hbox{and }u_r(R)=-(3R)^{1/3}$$ imply that $R$ must be equal to $a^3/3$ to match both boundary conditions. Notice that this function is exactly of class $C^{1,1/4}$, which is the conjectured optimal regularity for $\infty$-harmonic functions $v$, that is for functions satisfying $\Delta_\infty v=0$.

Therefore we cannot expect classical solutions. Since the equation is not in divergence form, we cannot expect a notion of weak solution either. Instead we define a {\it viscosity solution} $u$ of the equation
$$F(Du,D^2u):=-\langle D^2uDu,Du\rangle-1=0$$
as a continuous function which is both a viscosity sub- and viscosity supersolution. A viscosity subsolution has the property that $F(D\varphi,D^2\varphi)(x)\le0$ whenever $\varphi$ is a $C^2$-function such that $\varphi-u$ has a local minimum at $x$. A viscosity supersolution has the property that $F(D\psi,D^2\psi)(x)\ge0$ whenever $\psi$ is a $C^2$-function such that $\psi-u$ has a local maximum at $x$, see for instance \cite{CIL}. In our autonomous case we may also assume that $\varphi$ touches $u$ from above at $x$ if we check the definition of subsolutions, and that $\psi$ touches from below at $x$ if we check supersolutions.

Let us see that the explicit radial function $c-kr^{4/3}$, with $k=3^{4/3}/4$ is a viscosity solution of $F(Du,D^2u)=0$ at $x=0$. If $\varphi$ is a smooth function touching $u$ from above, then $\nabla\varphi(0)=0$, so $\varphi_\nu=0$ and $F(D\varphi,D^2\varphi)=-1$, which is less or equal to zero, as required for subsolutions. For supersolutions the set of test functions $\psi$ that touch $u$ from below in the origin is empty, so that the condition for a supersolution is trivially satified. Effects like this happen quite often when viscosity solutions are not smooth. Checking the property of sub- or supersolution is somehow easier in points where the solutions loose smoothness.

Now suppose that $\Omega$ is not necessarily a ball, but a more general smooth domain.

\begin{Remark}{\rm
From every point $x_0$ on $\partial\Omega$ we can follow the line of steepest ascent, parametrized as $x(t)$  by solving the initial value problem 
\begin{equation}\label{ODE}
x(0)=x_0, \qquad\frac{d x_i}{d t}=u_{x_i}\quad\hbox{\rm for small but positive }t.
\end{equation}
A simple calculation shows, assuming that $u$ is locally of class $C^2$,
\begin{equation}
\frac{d }{d t}\left(\left| \frac{d x}{d t}\right|^2\right)=2u_{x_i}u_{x_i x_j}u_{x_j}=-2,
\end{equation} 
so that upon integration from 0 to $t$
\begin{equation}
\left| \frac{d x}{d t}\right|^2=|\nabla u(x(t)|^2=a^2-2t.
\end{equation}
Note that this works until $t$ reaches $a^2/2$, at which time $\nabla u=0$. Subsequently we get the estimate
\begin{equation}
|x(t)-x_0|=\left|\int_0^t x_t(s)\ ds\right|\leq\frac{1}{3}\left( a^3-(a^2-2t)^{3/2}\right)\leq \frac{a^3}{3}.
\end{equation}
This shows that our trajectories can never reach a distance greater than $a^3/3$ from the boundary of $\Omega$ and that any critical point of $u$ that can be approached this way has at most distance $a^3/3$ from $\partial\Omega$.}
\end{Remark}

Notice that the radial solution on a ball is a web-function in the sense of \cite{CFG}, i.e. a function, whose value depends only on the distance to $\partial\Omega$. From now on we assume that a solution of  \eqref{firstdef} \eqref{bc1} \eqref{bc2} happens to be a webfunction for a general domain as well. This may be justified via the Cauchy-Kowalewski Theorem or by using the remark above, but we could not give a precise proof. Under this assumption we can interpret equation \eqref{firstdef} as an ordinary differential equation for a function $u(d)$  that depends only on the distance $d=d(x,\partial\Omega)$ to the boundary, with initial condition \eqref{bc1} and \eqref{bc2} at $d=0$.  
Then we arrive after the first integration at $u_\nu^3(d)-a^3=-3d$ or $-u_\nu=(a^3-3d)^{1/3}$ and after a second integration at
$$u(d)=\int_0^d (a^3-3t)^{1/3} \ dt=\frac{1}{4}\left[a^4-\left(a^3-3d\right)^{4/3}\right].$$
Clearly the integrations are only justifiable for sufficiently small $d$ and as long as $d$ is locally of class $C^{1,1}$. When $d=a^3/3$, the gradient of $u$ vanishes and we have reached the peak on our way uphill from the boundary. This shows that $\Omega$ has an inradius of exactly $a^3/3$. Incidentally, the points in $$M(\Omega):=\{y\in\Omega\ |\ d(y,\partial\Omega)=\max_{x\in\Omega} d(x,\partial\Omega)\}$$ belong to the {\it ridge} of $\Omega$ or {\it cut locus }Êof $\partial\Omega$, which is defined as follows.
Let $G$ be the largest open subset of $\Omega$ such that every point $x$ in $G$ has a unique closest point on ${\partial\Omega}$. Then we call
$${\cal R}(\Omega):=\Omega\setminus G$$
the ridge ${\cal R}(\Omega)$. In $G$, the distance $d(x,\partial\Omega)$ to the boundary is at least of class $C^1$, and also smooth, i.e., of class $C^{2}$ or $C^{k,\alpha}$ with $k\geq 2$ and $\alpha\in(0,1)$ provided $\partial\Omega$ is of the same class, see \cite{CM, LN2}. It is remarkable that even for a convex plane domain the ridge can have positive measure, see pages 10 and 11 in \cite{MM}. Simple examples such as an ellipse or a rectangle show that in general $M(\Omega)$ is a genuine subset of the ridge, but there are many domains with the property $M(\Omega)={\cal R}(\Omega)$. 

\medskip

Examples of such domains are for instance a stadium domain (convex hull of two balls of same radius and different center), an annulus, or plane domains which are generated as follows. Let $\gamma$ be a compact $C^{1,1}$ curve with curvature not exceeding $K$ in modulus and $\Omega=U_b(\gamma)=\{ x\in\R^2\ |\ d(x,\gamma)<b\}$ with $b<1/K$. Then $M(\Omega)={\cal R}(\Omega)$, see Figure 1.

\begin{figure}[h]
\centerline{\includegraphics[height=5truecm]{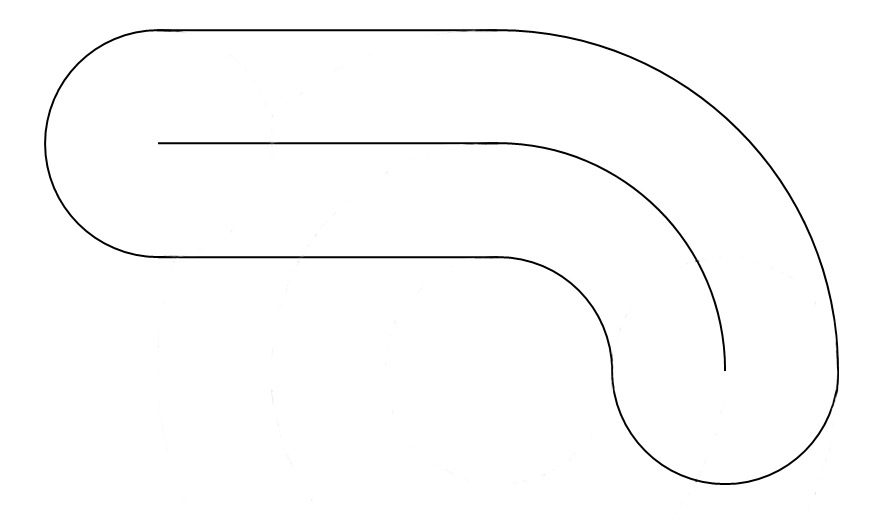}} 
\caption{A domain satisfying $M(\Omega)={\cal R}(\Omega)$}
\end{figure}

\begin{thm}\label{firsthm}
Suppose that $\partial\Omega$ is of class $C^{2}$. Then  a webfunction $u\in C^1(\overline{\Omega})$ is a viscosity solution of \eqref{firstdef} \eqref{bc1} \eqref{bc2} if and only if $M(\Omega)={\cal R}(\Omega)$ and every $x\in\partial\Omega$ has distance $a^3/3$ to ${\cal R}(\Omega)$.
\end{thm}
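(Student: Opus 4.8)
The plan is to exploit the web-function hypothesis to reduce everything to a one-dimensional picture. Write $d(x)=\dist(x,\partial\Omega)$ and $u=g\circ d$. On the open set $G=\Omega\setminus{\cal R}(\Omega)$ the distance function is of class $C^{2}$ (since $\partial\Omega$ is $C^{2}$, see \cite{CM,LN2}) and satisfies $|\nabla d|\equiv1$, hence $\langle D^{2}d\,\nabla d,\nabla d\rangle\equiv0$; consequently $\langle D^{2}(\phi\circ d)\,\nabla(\phi\circ d),\nabla(\phi\circ d)\rangle=\phi'(d)^{2}\phi''(d)$ for every one-variable $C^{2}$ function $\phi$, so composing one-dimensional test functions with $d$ turns \eqref{firstdef} into the ordinary differential equation $-(g')^{2}g''=1$. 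I will also use two \emph{soft} facts: ${\cal R}(\Omega)$ is relatively closed in $\Omega$ and nowhere dense, so $G$ is open and dense and $\max_{\Omega}d=\sup_{G}d=:\rho$; and $M(\Omega)\subseteq{\cal R}(\Omega)$ always, since a maximum point of $d$ lying in $G$ would have $\nabla d=0$ there, contradicting $|\nabla d|=1$ on $G$. Thus the real content of the theorem is the reverse inclusion ${\cal R}(\Omega)\subseteq M(\Omega)$ together with $\rho=a^{3}/3$ and $\dist(\partial\Omega,{\cal R}(\Omega))=a^{3}/3$.

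For the direction $(\Leftarrow)$, assume $M(\Omega)={\cal R}(\Omega)$ and $\dist(x,{\cal R}(\Omega))=a^{3}/3$ for all $x\in\partial\Omega$. These hypotheses force $\rho=a^{3}/3$: for $y_{0}\in M(\Omega)$ with nearest boundary point $p$ one has $a^{3}/3=\dist(p,{\cal R}(\Omega))\le|p-y_{0}|=\rho$, while picking $z\in M(\Omega)$ with $|p-z|=\dist(p,M(\Omega))=a^{3}/3$ gives $\rho=d(z)\le a^{3}/3$. Now set $u:=g\circ d$ with $g(s):=\tfrac14\bigl[a^{4}-(a^{3}-3s)^{4/3}\bigr]$, well defined and continuous on $\overline\Omega$ because $0\le d\le a^{3}/3$. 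On $G$ it is $C^{2}$, with $\nabla u=(a^{3}-3d)^{1/3}\nabla d$ and, by the identity above, $-\Delta_{\infty}u=-g'(d)^{2}g''(d)=1$ (since $g'(s)^{2}g''(s)\equiv-1$), so $u$ is a classical, hence viscosity, solution in $G$. Moreover $|\nabla u|=(a^{3}-3d)^{1/3}\to0$ as $d\to a^{3}/3$ and, using the Lipschitz bound for $d$, $|u(x)-u(x_{0})|=O(|x-x_{0}|^{4/3})$ at every $x_{0}\in{\cal R}(\Omega)$, so $u\in C^{1}(\overline\Omega)$ with $\nabla u\equiv0$ on ${\cal R}(\Omega)$; the boundary conditions follow from $g(0)=0$, $g'(0)=a$ and $\nabla d=-\nu$ on $\partial\Omega$. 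It remains to check the viscosity property at $x_{0}\in{\cal R}(\Omega)=M(\Omega)$: every $C^{2}$ test function has gradient $0$ at $x_{0}$, so the subsolution inequality reduces to $-1\le0$, which holds; and no $C^{2}$ function can touch $u$ from below at $x_{0}$, because moving toward a nearest boundary point $p$ along $\hat e=(p-x_{0})/|p-x_{0}|$ one has $d(x_{0}+t\hat e)=a^{3}/3-t$, hence $u(x_{0}+t\hat e)=u(x_{0})-c\,t^{4/3}$ with $c>0$, a $C^{1,1/4}$-type cusp below which no $C^{2}$ function with vanishing gradient at $x_{0}$ can stay; so the supersolution condition is vacuously satisfied.

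For the direction $(\Rightarrow)$, let $u=g\circ d\in C^{1}(\overline\Omega)$ be a web-function viscosity solution. By the reduction above $g$ is a viscosity solution of $-(g')^{2}g''=1$, and with $g(0)=0$ (from \eqref{bc1}) and $g'(0)=a$ (from \eqref{bc2} and $\nabla d=-\nu$) this forces $g(s)=\tfrac14\bigl[a^{4}-(a^{3}-3s)^{4/3}\bigr]$ on $[0,\rho]$ (interpreting the power via the real cube root), whose derivative $g'(s)=(a^{3}-3s)^{1/3}$ vanishes precisely at $s=a^{3}/3$; the uniqueness of the viscosity continuation through that degenerate value has to be checked, a locally constant branch being excluded since it violates the supersolution inequality. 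Now the decisive dichotomy: if $z\in{\cal R}(\Omega)$ then $g'(d(z))=0$, for otherwise $g$ would be a $C^{1}$-diffeomorphism near the value $d(z)$, making $d=g^{-1}\circ u$ of class $C^{1}$ on a whole neighbourhood of $z$; but differentiability of the distance forces the nearest boundary point to be unique, so that neighbourhood lies in $G$ and $z\in G$, a contradiction. Hence $d(z)=a^{3}/3$ for every $z\in{\cal R}(\Omega)$; applying this to a point of the nonempty set $M(\Omega)\subseteq{\cal R}(\Omega)$ gives $\rho=a^{3}/3$, whence ${\cal R}(\Omega)\subseteq\{d=a^{3}/3\}=M(\Omega)$, which with the automatic reverse inclusion gives $M(\Omega)={\cal R}(\Omega)$. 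Finally, for $x\in\partial\Omega$ the inward normal segment $x(t)=x-t\nu$ ($\nu$ the outer unit normal at $x$) lies in $G$ with $d(x(t))=t$ for small $t$ and cannot leave $G$ before $d$ reaches $a^{3}/3$ (in $G$ the integral curves of $\nabla d$ are straight segments along which $d$ grows at unit speed), so it meets ${\cal R}(\Omega)=M(\Omega)$ at $t=a^{3}/3$; thus $\dist(x,{\cal R}(\Omega))\le a^{3}/3$, while $|x-z|\ge d(z)=a^{3}/3$ for every $z\in M(\Omega)={\cal R}(\Omega)$ gives the reverse inequality.

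The step I expect to be the main obstacle is making the reduction to the one-dimensional equation fully rigorous and controlling $g$ at the degenerate value $s=a^{3}/3$, where $g'$ vanishes and $-(g')^{2}g''=1$ loses ellipticity, so that no spurious viscosity continuation occurs; this is intertwined with the (standard but not quite trivial) fact that the ridge is nowhere dense, which is what lets one identify $\sup_{G}d$ with $\rho$. By comparison the two conceptual ingredients are short: the inverse-function argument converting $C^{1}$-regularity of $u$ into membership in $G$, and the $C^{1,1/4}$-cusp of $u$ along segments toward nearest boundary points, which in the direction $(\Leftarrow)$ simultaneously yields $u\in C^{1}$ up to the ridge and makes the supersolution condition vacuous on $M(\Omega)$.
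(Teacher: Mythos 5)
Your proposal is correct in substance, and while the sufficiency half coincides with the paper's argument, your necessity half takes a genuinely different and in fact more complete route. For $(\Leftarrow)$ both proofs define $u=g\circ d$ with $g(s)=\tfrac14[a^4-(a^3-3s)^{4/3}]$, use $D^2d\,\nabla d=0$ on $G=\Omega\setminus{\cal R}(\Omega)$ to get a classical solution off the ridge, and handle ridge points by the zero-gradient subsolution check together with the emptiness of the set of test functions touching from below; the paper merely says ``argue as in the radial case'' here, whereas your quantitative $t^{4/3}$-cusp computation makes that step explicit and simultaneously yields $u\in C^1(\overline\Omega)$. For $(\Rightarrow)$ the paper also observes that a ridge point $z$ with $d(z)<a^3/3$ is a point where $g'(d(z))\neq0$ transfers the kink of $d$ to $u$, but it then derives the contradiction by exhibiting a $C^2$ test function violating the sub- or supersolution inequality, and carries this out only for model geometries (rounded rectangle, ellipse, $L$-shape). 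You instead invoke the hypothesis $u\in C^1(\overline\Omega)$ directly: where $g'\neq0$ the inverse function theorem gives $d=g^{-1}\circ u\in C^1$ near $z$, hence a neighbourhood of $z$ of points with unique nearest boundary point, forcing $z\in G$ --- a clean contradiction valid for arbitrary domains. What the paper's route buys is that it does not lean on the $C^1$ regularity of $u$ (only on the viscosity property), so it would in principle detect non-solutions among merely continuous webfunctions; what your route buys is full generality within the theorem as actually stated, plus an explicit verification of the second geometric condition ($\dist(x,{\cal R}(\Omega))=a^3/3$ for all $x\in\partial\Omega$), which the paper leaves implicit. The one step that is not airtight in your write-up --- uniqueness of the viscosity continuation of $g$ through the degenerate value $s=a^3/3$, needed to be sure $g'$ vanishes only there --- is honestly flagged by you and is simply taken for granted in the paper (it relies on the earlier formal integration); your remark that a locally constant branch violates the supersolution inequality at nearby points of $G$ is the right way to close it, though it still requires knowing that such points of $G$ realize the relevant values of $d$.
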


\begin{proof} In fact, if $M(\Omega)={\cal R}(\Omega)$, then the function
$$u(x)= \frac{1}{4}\left[a^4-\left(a^3-3d(x,\partial\Omega)\right)^{4/3}\right]$$
is well defined and differentiable everywhere in ${\Omega}$. Moreover, according to \cite{CM}, it is of class $C^2(\Omega\setminus{\cal R}(\Omega))$ and solves \eqref{firstdef} in $\Omega\setminus{\cal R}(\Omega)$ in the classical (and a fortiori in the viscosity) sense. Finally on $M(\Omega)={\cal R}(\Omega)$ we can argue as in the radial case to see that $u$ is a viscosity solution there as well. This shows that the geometric constraint $M(\Omega)={\cal R}(\Omega)$ is sufficient for the existence of solutions to \eqref{firstdef} \eqref{bc1} \eqref{bc2}.

To prove necessity, suppose that $M(\Omega)$ is a genuine subset of ${\cal R}(\Omega)$, so that there exists a $z\in {\cal R}(\Omega)\setminus M(\Omega)$. But then $d(z,\partial\Omega)< {a^3}/{3}$ and $d(z,\partial\Omega)$ has a kink in the sense that some directional derivative of $d$, and subsequently of $u$, is discontinuous at $z$. This is incompatible with being a viscosity solution, because one can then find an admissible test function $\varphi\in C^2(\Omega)$ for which $F(D\varphi, D^2\varphi)$ fails to satisfy the proper inequality. To be precise, suppose that $\Omega$ is essentially a rectangle (with rounded corners to make it smooth) or an ellipse. Then $z$ lies on a line segment and $d(x,\partial\Omega)$ is concave near $z$ and has one-sided nonzero derivatives in direction $\eta$ orthogonal to the ridge in $z$. But then one can choose a $C^2$ function $\varphi$, touching $u$ from above in $z$ such that $\nabla\varphi(z)\not=0$ points in direction $\eta$ and $\varphi_{\eta\eta}(z)<-K$, where $K$ is an arbitrarily large number. Therefore $F(D\varphi,D^2\varphi)(z)>0$, which contradicts the requirement for subsolutions. There is a similar reasoning using supersolutions, if $\Omega$ is essentially $L$-shaped and $u$ is convex and nondifferentiable on parts of its ridge.
\end{proof}

\section{The normalized or game-theoretic $\infty$-Laplacian }\label{section3}\setcounter{equation}{0}

Recently the following operator has received considerable attention (see for instance \cite{PSSW,PS,JK,LW,LW2,Y2}) in the PDE community
$$\Delta_\infty^Nu=\langle D^2u Du, Du\rangle|Du|^{-2}.$$
Here $u(x)$ denotes the (unique) running costs in a differential game called \lq\lq tug of war\rq\rq, see \cite{Y2}. 
Let us therefore study the differential equation
\begin{equation}\label{seconddef}
-u_{\nu\nu}=1\qquad\hbox{in }\Omega
\end{equation}
under boundary conditions \eqref{bc1} and \eqref{bc2}. A simple integration shows that certainly for a ball of radius $R=a$ this overdetermined problem has the explicit solution $u(r)=(a^2-r^2)/2$, provided we can live with the ambiguity that $\nu$ is not properly defined at the origin. Fortunately the notion of viscosity solution allows us to do so. A viscosity solution $u$ of 
\begin{equation}\label{Geqn}
G(Du,D^2u)(x):=-\frac{\langle D^2uDu,Du\rangle}{|Du|^2}(x)-1=0\qquad\hbox{in }\Omega
\end{equation}
is a viscosity subsolution of $G_*(Du,D^2u)=0$ and a viscosity supersolution of $G^*(Du,D^2u)=0$. Here $G_*$ and $G^*$ are the upper and lower semicontinuous envelopes of $G$, see Remark 6.3 in \cite {CIL}. Thus $u\in C(\Omega)$ is a {\it viscosity subsolution} of \eqref{seconddef} or \eqref{Geqn}, if for every $x\in\Omega$ and every smooth test function $\varphi$, that touches $u$ from above (only) in $x$, the following relations hold:
\begin{equation}\label{Gsubsol}
\begin{cases}
G(\nabla\varphi(x),D^2\varphi(x))\le0 &\hbox{ when }\nabla\varphi(x)\not=0, \\
\quad-\Lambda(D^2\varphi(x))-1\le0 &\hbox{ when }\nabla\varphi(x)=0.
\end{cases}
\end{equation}
In a similar fashion {\it viscosity supersolutions} $u\in C(\Omega)$ of \eqref{seconddef} are characterized by the fact that
\begin{equation}\label{Gsupersol}
\begin{cases}
G(\nabla\psi(x),D^2\psi(x))\geq 0 &\hbox{ when }\nabla\psi(x)\not=0, \\
\quad-\lambda(D^2\psi(x))-1\geq 0 & \hbox{ when }\nabla\psi(x)=0.
\end{cases}
\end{equation}
for every smooth test function $\psi$ that touches $u$ from below (only) in $x$. Here $\Lambda(X)$ and $\lambda(X)$ denote the maximal and minimal (nonnegative) eigenvalue of the symmetric matrix $X$. 

For a more general $\Omega$, if we interpret \eqref{seconddef} again as an ODE and \eqref{bc1} and \eqref{bc2} as initial data on $\partial\Omega$, then an integration like in the previous section along lines of steepest ascent of $u$ leads to the local representation
$$u(x)=\frac{d(x,\partial\Omega)}{2}\,\big(2a-d(x,\partial\Omega)\big)\quad\hbox{in }\Omega\setminus{\cal R}(\Omega).$$

\begin{thm}
Suppose that $\partial\Omega$ is of class $C^{2}$. Then a webfunction $u\in C^{1}(\overline{\Omega}) $ is a viscosity solution of \eqref{seconddef} \eqref{bc1} \eqref{bc2} if and only if $M(\Omega)={\cal R}(\Omega)$ and every $x\in\partial\Omega$ has distance $a$ to ${\cal R}(\Omega)$. 
\end{thm}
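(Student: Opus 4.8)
The plan is to mirror the proof of Theorem~\ref{firsthm}, adapting it to the normalized operator and its explicit web-solution $u(x)=\tfrac{d}{2}(2a-d)$ with $d=d(x,\partial\Omega)$. For sufficiency, assume $M(\Omega)={\cal R}(\Omega)$ and that every boundary point has distance $a$ to ${\cal R}(\Omega)$. Then $d$ ranges over $[0,a]$, the formula for $u$ is well defined and $C^1$ on all of $\overline\Omega$, and by \cite{CM} it is $C^2$ on $\Omega\setminus{\cal R}(\Omega)$, where a direct computation along the lines of the integration preceding the statement shows $u$ solves \eqref{seconddef} classically (hence in the viscosity sense). At the peak set $M(\Omega)={\cal R}(\Omega)$, where $\nabla u=0$, one argues exactly as in the radial ball case: for any smooth $\psi$ touching $u$ from below at such a point the set of test functions is empty (since $u$ has a strict interior maximum of the quadratic type $a^2/2 - $ const$\cdot d^2$ there and the second-order terms cannot be matched from below — more precisely the relevant one-sided behaviour makes supersolution-testing vacuous or immediate), so the supersolution condition $-\lambda(D^2\psi)-1\ge0$ holds trivially; for $\psi$ touching from above one checks $-\Lambda(D^2\psi)-1\le0$ using that near the peak $u$ is bounded above by a concave paraboloid so $\Lambda(D^2\psi)\ge1$. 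This yields that $u$ is a viscosity solution, giving sufficiency.

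For necessity, suppose a web-function viscosity solution $u\in C^1(\overline\Omega)$ exists. Interpreting \eqref{seconddef} as an ODE along steepest-ascent lines forces $u(x)=\tfrac{d}{2}(2a-d)$ wherever $d$ is $C^{1,1}$, i.e.\ on $\Omega\setminus{\cal R}(\Omega)$; in particular $-u_\nu=(a-d)$ there. Since $u\in C^1(\overline\Omega)$, the gradient $\nabla u$ extends continuously up to ${\cal R}(\Omega)$, which forces $a-d(x,\partial\Omega)$ to have a continuous (single-valued) limit at each ridge point; at a ridge point $z\notin M(\Omega)$ the distance function has a genuine kink, with distinct one-sided directional derivatives of $d$ (hence of $u$) across the ridge in a direction $\eta$ transverse to ${\cal R}(\Omega)$, contradicting $u\in C^1$. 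This already rules out ${\cal R}(\Omega)\neq M(\Omega)$ in the generic situation, just as in Theorem~\ref{firsthm}: one inserts a $C^2$ test function $\varphi$ touching $u$ from above at $z$ with $\nabla\varphi(z)=\nabla u(z)\neq0$ pointing along $\eta$ and $\varphi_{\eta\eta}(z)$ as negative as we like, so that $G(\nabla\varphi,D^2\varphi)(z)=-\varphi_{\eta\eta}(z)/|\nabla\varphi(z)|^2\cdot|\nabla\varphi(z)|^2\cdot(\dots)$ — concretely $-\langle D^2\varphi\,\nabla\varphi,\nabla\varphi\rangle/|\nabla\varphi|^2-1>0$, violating the subsolution inequality \eqref{Gsubsol}; the convex/$L$-shaped case is handled symmetrically with supersolutions via \eqref{Gsupersol}. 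Once $M(\Omega)={\cal R}(\Omega)$ is established, the first-integral identity $-u_\nu=(a-d)$ together with $u_\nu=0$ on the peak set forces $d=a$ there, i.e.\ every boundary point has distance exactly $a$ to ${\cal R}(\Omega)$.

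The main obstacle is the necessity direction, and specifically making rigorous the claim that a web-function viscosity solution \emph{must} coincide with the explicit ODE profile on $\Omega\setminus{\cal R}(\Omega)$ and that its $C^1$-extendability across a non-maximal ridge point is genuinely obstructed. The subtlety is that, unlike the classical $\infty$-Laplacian where $-u_{\nu\nu}|u_\nu|^2=1$ degenerates only at critical points, here the normalization means the equation is \emph{non-degenerate in the gradient direction} wherever $\nabla u\neq0$, so the ODE reduction is clean; the delicate point is instead the same geometric dichotomy as before (concave kink tested against subsolutions, convex kink against supersolutions), and one should be careful that the argument covers all shapes of ridge rather than only the two illustrative examples — the paper's proof of Theorem~\ref{firsthm} already proceeds by these representative cases, so I would follow the same (admittedly non-exhaustive) style, emphasising that any $z\in{\cal R}(\Omega)\setminus M(\Omega)$ carries a one-sided second-order defect of a definite sign that can be exploited by a suitable test function.
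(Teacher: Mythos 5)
Your proposal is essentially the paper's intended argument: the paper gives no separate proof of this theorem, stating only that it parallels Theorem~\ref{firsthm}, and your sufficiency/necessity structure (the explicit web profile $u=\tfrac{d}{2}(2a-d)$ solving the equation classically off the ridge, plus kink-exploiting test functions at points of ${\cal R}(\Omega)\setminus M(\Omega)$, plus the first integral $-u_\nu=a-d$ forcing the ridge to sit at distance $a$) is exactly that parallel. One correction to a detail: unlike the classical case, where the radial profile $c-kr^{4/3}$ is only $C^{1,1/3}$ at the peak and admits no lower test function, here the radial solution $(a^2-r^2)/2$ is a smooth paraboloid, so the set of test functions touching from below at the peak is \emph{not} empty; the supersolution condition there is instead verified directly from $D^2\psi(z)\le D^2u(z)$, whose smallest eigenvalue is $-1$, giving $-\lambda(D^2\psi(z))-1\ge 0$ (and symmetrically $\Lambda(D^2\varphi(z))\ge -1$ for upper test functions, not $\ge 1$ as you wrote).
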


The proof parallels the one of Theorem \ref{firsthm} and is left to the reader. 

\begin{Remark}{\rm
Notice that annuli provide examples of domains (other than balls) for which a smooth solution of this problem (but not of Serrin's and Weinberger's original problem) exists.}
\end{Remark}

\section{The limit of $u_p$ }\label{section4}\setcounter{equation}{0}

It is well-known, that $p$-harmonic functions or viscosity solutions of $\Delta_p u=0$ converge to the viscosity solution of $\Delta_\infty u=0$ as $p\to\infty$. Therefore one is inclined to believe that solutions $u_p$ of the inhomogeneous equation \eqref{op} should converge to those of \eqref{firstdef}. This is not the case, and in the present section we investigate this limit. For $\Omega$ a ball in $\R^n$ the solutions of \eqref{op}, \eqref{bc1} were explicitly calculated and shown to converge uniformly to $d(x,\partial\Omega)$ in \cite{K}. Let us demonstrate that this behaviour happens for any connected domain, even for a nonsmooth one. First one has to note that $u_p$ on $\Omega$ can be estimated in $L^q$ for any $q\in[0,\infty]$ by the corresponding solution $U_p$ on a ball $\Omega^*$ of same volume as $\Omega$, so that the $u_p$ are uniformly bounded in $L^\infty(\Omega)$ as $p\to \infty$. Furthermore $u_p$ minimizes the functional
$$J_p(v)=\int_\Omega\Big[\frac{1}{p}|\nabla v(x)|^p-v(x)\Big]\,dx\qquad\hbox{on }W^{1,p}_0(\Omega).$$
In particular
$$J_p(u_p(x))\le J_p(d(x,\partial\Omega))=\frac{1}{p}|\Omega|-\int_\Omega d(x,\partial\Omega)\,dx,$$
the right hand of which is negative for sufficiently large $p$. Thus
$$\int_\Omega|\nabla u_p|^p\,dx\le p\int_\Omega u_p\,dx,$$
or for $p>q$ and $q$ large enough
$$\int_\Omega|\nabla u_p|^q\,dx\le\left(\int_\Omega|\nabla u_p|^p\,dx\right)^{q/p}|\Omega|^{1-q/p}\le\left(p\int_\Omega u_p\,dx\right)^{q/p}|\Omega|^{1-q/p}.$$
But this implies $||\nabla u_p||_q\leq p^{1/p}||u_p||^{1/p}_\infty |\Omega|^{1/q}$, so that the family $\left\{ u_p\right\}_{p\to\infty}$ is uniformly bounded in every $W^{1,q}(\Omega)$ and converges uniformly to some limit $u_\infty$ with Lipschitz constant $1$.

Therefore $|\nabla u_\infty| \leq 1$ a.e. in $\Omega$, and this implies not only that $u_\infty(x)\leq d(x,\partial\Omega)$ in $\Omega$, but it (almost) proves the first half of our following result.

\begin{thm}\label{thirdthm}
The limit $u_\infty$ is a viscosity solution of the eikonal equation $|Du(x)|-1=0$ in $\Omega$ under the Dirichlet boundary condition $u=0$ on $\partial\Omega$.
\end{thm}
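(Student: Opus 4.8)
The plan is to verify the subsolution and supersolution inequalities separately, using only that $u_\infty$ is $1$-Lipschitz for the first and an identification $u_\infty=d(\cdot,\partial\Omega)$ for the second; since the operator $(\xi,X)\mapsto|\xi|-1$ is continuous, no issue with semicontinuous envelopes arises. For the subsolution part I would argue exactly as in the sentence preceding the statement: if $\varphi\in C^2$ touches $u_\infty$ from above at $x_0\in\Omega$, then for every unit vector $e$ and small $t>0$ the Lipschitz bound gives $\varphi(x_0+te)-\varphi(x_0)\ge u_\infty(x_0+te)-u_\infty(x_0)\ge -t$; dividing by $t$ and letting $t\to0^+$ yields $\nabla\varphi(x_0)\cdot e\ge-1$ for all $e$, hence $|\nabla\varphi(x_0)|\le1$. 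Together with $u_\infty=0$ on $\partial\Omega$ — immediate from the uniform convergence of the $u_p$ — this settles the subsolution inequality.

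The supersolution part I would reduce to the claim $u_\infty=d(\cdot,\partial\Omega)$ on $\overline\Omega$. The bound $u_\infty\le d(\cdot,\partial\Omega)$ is already available, since $|\nabla u_\infty|\le1$ a.e. and $u_\infty$ vanishes on $\partial\Omega$. For the reverse inequality I would fix $x_0\in\Omega$ and $0<r<d(x_0,\partial\Omega)$, so that $\overline{B_r(x_0)}\subset\Omega$, let $U^{(r)}_p\in W^{1,p}_0(B_r(x_0))$ be the radial solution of $-\Delta_pU=1$ in $B_r(x_0)$, note that $u_p\ge0$ (because $-\Delta_pu_p=1>0$ and $u_p=0$ on $\partial\Omega$) so that $u_p\ge U^{(r)}_p=0$ on $\partial B_r(x_0)$, and invoke the weak comparison principle for the $p$-Laplacian to get $u_p\ge U^{(r)}_p$ in $B_r(x_0)$; since $U^{(r)}_p$ is radially decreasing, $u_p(x_0)\ge U^{(r)}_p(x_0)=\max U^{(r)}_p$. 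By \cite{K} the functions $U^{(r)}_p$ converge uniformly to $d(\cdot,\partial B_r(x_0))$, whose value at the center is $r$, so $u_\infty(x_0)=\lim_p u_p(x_0)\ge r$; letting $r\uparrow d(x_0,\partial\Omega)$ gives $u_\infty(x_0)\ge d(x_0,\partial\Omega)$, hence $u_\infty=d(\cdot,\partial\Omega)$.

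With this identification, the supersolution inequality follows by the standard distance-function argument: given $\psi\in C^2$ touching $u_\infty=d(\cdot,\partial\Omega)$ from below at $x_0\in\Omega$, I would pick a nearest boundary point $y_0$, set $e=(x_0-y_0)/|x_0-y_0|$, observe that $d(x_0-te,\partial\Omega)=d(x_0,\partial\Omega)-t$ for small $t>0$ — here $\le$ holds because $y_0$ remains a competitor for $x_0-te$ and $\ge$ by $1$-Lipschitz continuity — and conclude $\psi(x_0)-\psi(x_0-te)\ge d(x_0,\partial\Omega)-\bigl(d(x_0,\partial\Omega)-t\bigr)=t$, whence $\nabla\psi(x_0)\cdot e\ge1$ and $|\nabla\psi(x_0)|\ge1$. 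At the ridge of $\Omega$, and in particular at its inradius points, the linear-rate descent just used shows that no $C^2$ function touches $d(\cdot,\partial\Omega)$ from below, so the supersolution requirement holds there vacuously, in the spirit of the computation at the origin in Section~\ref{section2}. The main obstacle is precisely the lower bound $u_\infty\ge d(\cdot,\partial\Omega)$: this is the point where one needs input beyond the soft estimates of the previous section, namely the $p$-Laplacian comparison principle together with the explicit ball asymptotics of \cite{K}. An alternative would be to pass to the limit directly in the viscosity formulation of $-\Delta_pu_p=1$, recalling that weak solutions of the $p$-Laplace equation are viscosity solutions, but that route requires the customary care at test points where the gradient vanishes, which the comparison argument avoids.
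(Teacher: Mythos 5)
Your proof is correct, but the supersolution half follows a genuinely different route from the paper's. The paper passes to the limit in the viscosity formulation of $-\Delta_p u_p=1$: for a test function $\psi$ touching $u_\infty$ from below at $x_0$ it produces touching points $x_p\to x_0$ for $u_p$, divides the resulting inequality by $(p-2)|D\psi(x_p)|^{p-4}$, rules out $D\psi(x_p)=0$, and concludes that $u_\infty$ is a viscosity solution of the full limiting equation $\min\{\,|Du|-1,\ -\langle D^2u\,Du,Du\rangle\,\}=0$, of which $|Du|\ge 1$ in the supersolution sense is a by-product; the identity $u_\infty=d(\cdot,\partial\Omega)$ is only deduced a posteriori, in the remark after the theorem, from uniqueness for the eikonal equation. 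You instead prove $u_\infty=d(\cdot,\partial\Omega)$ first, by comparing $u_p$ with the explicit radial solutions on inscribed balls $B_r(x_0)$ (whose centre values tend to $r$ as $p\to\infty$), and then verify the supersolution inequality directly for the distance function via the nearest-boundary-point direction. Your route is more elementary: it avoids the limit passage in the viscosity formulation and the attendant care at test points where the gradient vanishes, and it yields the identification $u_\infty=d$ constructively rather than through a uniqueness theorem. What it does not give is the stronger limiting equation $\min\{|Du|-1,-\Delta_\infty u\}=0$, which the paper's argument establishes en passant and which feeds the subsequent remarks on the $\infty$-eigenvalue problem. Two small observations: your nearest-point argument works at every point of $\Omega$, ridge points included, since only the existence (not uniqueness) of a closest boundary point is needed, so the separate ``vacuously satisfied on the ridge'' clause is unnecessary; and your subsolution half is essentially identical to the paper's Lipschitz argument, so there the two proofs coincide.
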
 

\begin{Remark}{\rm
Since this Hamilton-Jacobi equation has a unique viscosity solution, see e.g. \cite{CIL}, we obtain $u_\infty:=d(x,\partial\Omega)$ as a Lipschitz solution for a highly overdetermined boundary value problem. It satisfies not only $|Du|-1=0$ in $\Omega$ but also $-\Delta_\infty u_\infty=0$ in $\Omega\setminus{\cal R}(\Omega)$, and not only $u=0$ on $\partial\Omega$ but also $-\tfrac{\partial u}{\partial\nu}=1$ on differentiable parts of $\partial\Omega$.}
\end{Remark}

\begin{Remark}{\rm
Notice that the statement $M(\Omega)={\cal R}(\Omega)$ is conspicuously missing in Theorem \ref{thirdthm}. Under the additional assumption $M(\Omega)={\cal R}(\Omega)$, however, the function $u_\infty$ is moreover (up to multiplication by a constant) the unique eigenfunction for the $\infty$--Laplacian operator, i.e. it satisfies in addition
$$\min\left\{-\langle D^2u_\infty(x) Du_\infty(x),Du_\infty(x)\rangle, \ -|Du(x)|+\Lambda_\infty u(x)\right\}=0\qquad\hbox {in }\Omega$$
in the viscosity sense, see \cite{J, Y}. Here $\Lambda_\infty$ is the inverse of the inradius of $\Omega$. Without this assumption, as demonstrated in \cite{JLM} there is nonuniqueness of this eigenfunction.}
\end{Remark}

{\it Proof of Theorem \ref{thirdthm}.}
Let us first realize that $|Du_\infty|\leq 1$ a.e. in $\Omega$ implies $|Du_\infty|-1\leq 0$ in the viscosity sense. Otherwise there would be a function $\varphi\in C^2$ touching $u$ from above in some $x_0$ such that $|Du(x_0)|\geq 1+\gamma$, with $\gamma >0$, and $|Du(x)\geq1+\gamma/2$ in a neighbourhood  $B_\varepsilon(x_0)$. But then $u(x_0)-u(x)\geq \varphi(x_0)-\varphi(x)\geq (1+\gamma/2)|x_0-x|$ for a suitable $x\in B_\varepsilon(x_0)$. This contradicts the fact that $u_\infty$ has Lipschitz constant 1.

To show the reverse inequality, it is instructive to follow ideas in \cite{J, BDM} and to identify the limiting equation. Suppose that $\varphi$ is a $C^2$-function such that $\varphi-u_\infty$ has a local minimum at $x_0\in \Omega$. Then without loss of generality we may assume that $\varphi-u_\infty\geq \delta>0$ on $\partial B_\varepsilon(x_0)\subset\Omega$. Moreover, for $p$ large enough,  $\varphi-u_p$ has a local minimum at some $x_p\in B_\varepsilon(x_0)$ and $x_p\to x_0$ as $p\to \infty$. Since $u_p$ is a viscosity subsolution of (\ref{op})
\begin{equation}\label{opv}
- |Du|^{p-2}\left( tr(D^2u) +(p-2)\frac{\langle D^2u Du, Du\rangle }{|Du|^2}\right) -1 =0\quad\hbox{ in } \Omega,
\end{equation}
it follows
$$- |D\varphi(x_p)|^{p-2}\left( tr(D^2\varphi(x_p)) +(p-2)\frac{\langle D^2\varphi(x_p) D\varphi(x_p), D\varphi(x_p)\rangle }{|D\varphi(x_p)|^2}\right)\leq 1.$$
Now either $|D\varphi(x_0)|\leq 1$ or otherwise there exists a positive constant $\gamma$ independent of $p$, such that $|D\varphi(x_p)|>1+\gamma$ for large $p$.
Upon division of the last inequality by $(p-2)|D\varphi(x_p)|^{p-4}$ one sees that in this case the first term on the left and the right hand side in 
$$-\tfrac{1}{p-2}|D\varphi(x_p)|^2 tr D^2\varphi(x_p)-\langle D^2\varphi(x_p) D\varphi(x_p), D\varphi(x_p)\rangle\leq \tfrac{1}{p-2}|D\varphi(x_p)|^{4-p}$$
converge to zero as $p\to\infty$, so that 
$-\langle D^2\varphi(x_0) D\varphi(x_0), D\varphi(x_0)\rangle\leq 0$. This proves that  $u_\infty$ is a viscosity subsolution of 
\begin{equation} \label{limeq}
\min \{\ |Du|-1, -\langle D^2u Du, Du\rangle\ \}= 0\quad\hbox{ in } \Omega.
\end{equation}
A similar reasoning holds for supersolutions. Since $u_p$ is a viscosity supersolution of (\ref{opv}), we have
$$- |D\psi(x_p)|^{p-2}\left( tr(D^2\psi(x_p)) +(p-2)\frac{\langle D^2\psi(x_p) D\psi(x_p), D\psi(x_p)\rangle }{|D\psi(x_p)|^2}\right)\geq 1$$
for testfunctions $\psi\in C^2$ such that $u-\psi$ has a local maximum at $x_0$ and $u_p-\psi$ has a local maximum at $x_p$. This time we can rule out that $D\psi(x_p)=0$, otherwise the last inequality cannot hold. Arguing as before, the inequality
$$-\tfrac{1}{p-2}|D\psi(x_p)|^2 tr D^2\psi(x_p)-\langle D^2\psi(x_p) D\psi(x_p), D\psi(x_p)\rangle\geq \tfrac{1}{p-2}|D\psi(x_p)|^{4-p}$$
follows and leads to $|D\psi(x_0)|\geq 1$, because else the right hand side would explode for $p\to\infty$, as well as  to
$-\langle D^2\psi(x_0) D\psi(x_0), D\psi(x_0)\rangle \geq 0$. This shows that $u_\infty$ is also a viscosity supersolution of (\ref{limeq}). In particular $u_\infty$ satisfies $|Du|\geq 1$ in the viscosity sense, and this completes the proof of Theorem \ref{thirdthm}.\hfill\qed

\bigskip
\noindent{\bf Acknowledgement:} This paper was conceived in December 2008 during a GNAMPA-INDAM visit of B. Kawohl to Pisa. This research is also part of the ESF program {\it``Global and geometrical aspects of nonlinear partial differential equations''}. The authors are gratefully indebted to the referee for helpful questions that led to an improvement of the paper.

\bigskip
\noindent
Author's addresses:

\bigskip
{\small
\begin{minipage}[t]{6.3cm}
Giuseppe Buttazzo\\
Dipartimento di Matematica\\
Universit\`a di Pisa\\
Largo B. Pontecorvo, 5\\
I-56127 Pisa - ITALY\\
{\tt buttazzo@dm.unipi.it}
\end{minipage}
\begin{minipage}[t]{6.3cm}
Bernd Kawohl\\
Mathematisches Institut\\
Universit\"at zu K\"oln\\
Albertus-Magnus-Platz\\
D-50923 K\"oln - GERMANY\\
{\tt kawohl@mi.uni-koeln.de}
\end{minipage}}


\begin{thebibliography}{99}

\bibitem{BDM}
Bhattacharya T., DiBenedetto E., Manfredi J.: {\it Limits as $p\to\infty$ of $\Delta_pu_p=f$ and related extremal problems.} Rend. Sem. Mat. Univ. Politec. Torino, Special Issue (1991), 15--68.
 
\bibitem{CIL} 
Crandall M., Ishii H., Lions P.-L.: {\it User's guide to viscosity solutions of second order partial differential equations.} Bull. Amer. Math. Soc., {\bf27} (1992), 1--67.


\bibitem{CFG}
Crasta, G., Fragal`a, I., Gazzola, F.: {\it
On the role of energy convexity in the web function approximation.}
NoDEA Nonlinear Differential Equations Appl., {\bf 12} (2005),  93Ð109.

\bibitem{CM}
Crasta G., Malusa A.: {\it The distance function from the boundary in a Minkowski space.} Trans. Amer. Math. Soc., {\bf359} (2007), 5725--5759.

\bibitem{FK}
Farina A., Kawohl B.: {\it Remarks on an overdetermined boundary value problem.}  Calc. Var. Partial Differential Equations, {\bf89} (2008), 351--357.

\bibitem{GL}
Garofalo N., Lewis J.L.: {\it A symmetry result related to some overdetermined boundary value problems.} Amer. J. Math., {\bf111} (1989), 9--33.

\bibitem{J}
Juutinen P.: {\it Minimization problems for Lipschitz functions via viscosity solutions.}  Ann. Acad. Sci. Fenn. Math. Diss., {\bf115} (1998).

\bibitem{JLM} 
Juutinen P., Lindqvist P., Manfredi J.: {\it The $\infty$-eigenvalue problem.} Arch. Ration. Mech. Anal., {\bf148} (1999), 89--105.

\bibitem{JK}
Juutinen P., Kawohl B.: {\it On the evolution governed by the infinity Laplacian.} Math. Ann., {\bf335} (2006), 819--851.

\bibitem{K}
Kawohl B.: {\it On a family of torsional creep problems.} J. Reine Angew. Math., {\bf 410} (1990), 1--22.


\bibitem{LN2}
Li Y.Y., Nirenberg L.: {\it Regularity of the distance function to the boundary.} Rend. Accad. Naz. Sci. XL Mem. Mat. Appl.,  {\bf29} (2005), 25--264.

\bibitem{LW}
Lu G., Wang P.: {\it A PDE perspective of the normalized Infinity Laplacian.} Comm. Partial Differential Equations, {\bf33} (2008), 1788--1817.

\bibitem{LW2}
Lu G., Wang P.: {\it A uniqueness theorem for degenewrate elliptic equations}.
Lecture Notes of Seminario Interdisciplinare di Matematica,
Conference on Geometric Methods in PDE's,
On the Occasion of 65th Birthday of Ermanno Lanconelli
(Bologna, May 27-30, 2008)
Edited by Giovanna Citti, Annamaria Montanari, Andrea Pascucci, Sergio Polidoro, 207--222.


\bibitem{MM}
Mantegazza C., Mennucci A.C.: {\it Hamilton-Jacobi equations and distance functions on Riemannian manifolds.} Appl. Math. Optim., {\bf47} (2003), 1--25.


\bibitem{PSSW}
Peres Y., Schramm O., Sheffield S., Wilson D.: {\it Tug-of-war and the infinity Laplacian.} J. Amer. Math. Soc., {\bf22} (2009), 167--210.

\bibitem{PS} Peres, Y., Sheffield, S.: {\it Tug-of-war with noise: a game-theoretic view of the $p$-Laplacian.} Duke Math. J.,  {\bf 145} (2008), 91--120.

\bibitem{S}
Serrin J.: {\it A symmetry problem in potential theory.} Arch. Ration. Mech. Anal., {\bf43} (1971), 304--318.

\bibitem{W}
Weinberger H.: {\it Remark on the preceding paper of Serrin.} Arch. Ration. Mech. Anal., {\bf43} (1971), 319--320.

\bibitem{Y}
Yu Y.: {\it Some properties of the ground states of the infinity Laplacian.} Indiana Univ. Math. J., {\bf56} (2007), 947--964.

\bibitem{Y2} 
Yu, Y.: {\it Uniqueness of values of Aronsson operators and running costs in ``tug-of-war'' games.} Ann. Inst. H. Poincar\'e Anal. Non LinŽaire {\bf 26} (2009), 1299--1308.

\end{thebibliography}
\end{document}